\theoremstyle{plain}
\newtheorem{theorem}{Theorem}[section]
\newtheorem{proposition}[theorem]{Proposition}
\theoremstyle{definition}
\newtheorem{example}[theorem]{Example}
\newcommand {\Prob} {\ensuremath{\mathbb{P}}}
\newcommand {\R} {\ensuremath{\mathbb{R}}}
\newcommand {\ZZ} {\ensuremath{\mathbb{Z}}}
\newcommand {\N} {\ensuremath{\mathbb{N}}}
\newcommand{\df}{\coloneqq}
\newcommand{\E}{\mathrm{e}}
\newcommand{\X}{\mathrm{X}}
\newcommand{\calC}{\mathcal{C}}
\newcommand{\D}{\mathrm{d}}
\numberwithin{equation}{section}
\title{A note on the uniform ergodicity of diffusion processes}
\author[N.\ Sandri\'{c}]{Nikola Sandri\'{c}}
\address[Nikola\ Sandri\'{c}]{Department of Mathematics\\University of Zagreb\\ Zagreb\\Croatia}
\email{nsandric@math.hr}
\subjclass[2020]{
60J25, 60J60, 60G17
}
\keywords{diffusion process, uniform ergodicity, total variation distance}
\begin{document}
\allowdisplaybreaks[4]

\begin{abstract}
 In this note, we discuss the uniform ergodicity of a diffusion process given by an Itô stochastic differential equation. We present an integral condition in terms of the drift and diffusion coefficients that ensures the uniform ergodicity of the corresponding transition kernel with respect to the total variation distance. Applications of the obtained results to a class of subordinate diffusion processes are also presented.
\end{abstract}

\maketitle

\section{Introduction}

One of the classical directions in the analysis of Markov processes centers around their ergodicity properties. In this note, we study the uniform ergodicity of a diffusion process given by  
\begin{equation}\begin{aligned} \label{eq1}\D X(x,t)&=b(X(x,t)) +\sigma(X(x,t))\D B(t)\\X(x,0)&=x\in\R^{d}\end{aligned}\end{equation}
with respect to the total variation distance.
Here, $\{B(t)\}_{t\ge0}$ stands for a standard $n$-dimensional Brownian motion (defined on a stochastic basis $(\Omega,\mathcal{F},\{\mathcal{F}(t)\}_{t\ge0},\mathbb{P})$ satisfying the usual conditions), and the coefficients $b:\R^d\to\R^d$ and $\sigma:\R^d\to\R^{d\times n}$ satisfy the following conditions: 

\medskip

\begin{description}
	\item[(A1)] For any $r>0$,
	$$\sup_{x\in B_r(0)}(\rvert b(x)\rvert+\lVert\sigma(x)\lVert_{{\rm HS}})<\infty.$$
	\item[(A2)] For any $r>0$, there exists $\Gamma_r>0$ such that for all $x,y\in B_r(0)$,  $$2\langle x-y,b(x)-b(y)\rangle+\lVert\sigma(x)-\sigma(y)\lVert_{{\rm HS}}^2\le \Gamma_r|x-y|^{2}.$$
	\item[(A3)] There exists $\Gamma>0$ such that for all $x\in\R^d$,  $$2\langle x,b(x)\rangle+\lVert\sigma(x)\lVert_{{\rm HS}}^{2}\le\Gamma(1+|x|^2).$$
	\item[(A4)] There exist  $x_0\in\R^d$ and $r_0,\alpha,D,\delta>0$,  such that for all   $x,y\in B_{r_0}(x_0)$,
	$$|b(x)-b(y)|+\lVert \sigma(x)\sigma(x)^T-\sigma(y)\sigma(y)^T\rVert_{{\rm HS}}\le D|x-y|^{\alpha}\quad\textrm{and}\quad \langle \sigma(x)^Ty,\sigma(x)^Ty\rangle\ge \delta|y|^{2}.$$
		\item[(A5)] For $x_0\in\R^d$ and $r_0>0$ from (A4),  $\sigma(x)\sigma(x)^T$ is positive definite on $B^c_{r_0}(x_0)$.
\end{description}

\medskip

\noindent Here,  $B_r(x)$ denotes the open ball with radius $r>0$ around $x\in\R^d$, and  $\lVert M\lVert_{{\rm HS}}^2:={\rm Tr}\,MM^T$ is the  Hilbert-Schmidt norm of a real  matrix $M.$

\subsection{Structural properties of the model} \label{ss1}
Under (A1)-(A3), for any $x\in\R^d$, the stochastic differential equation (SDE) in \eqref{eq1} admits a unique strong non-explosive solution $\{X(x,t)\}_{t\ge0}$ which is a strong Markov process with continuous sample paths and  transition kernel $p(t,x,\D y)=\Prob(X(x,t)\in \D y)$, $t\ge0$, $x\in\R^d$. 
See  \cite[Theorems 5.4.1, 5.4.5 and 5.4.6]{Durrett-Book-1996} and \cite[Theorem 3.1.1]{Prevot-Rockner-Book-2007} for more details. In the context of Markov processes, it is natural that the underlying probability measure depends on the initial distribution of the process. Using standard arguments (Kolmogorov extension theorem), it is well known that for each $x\in\R^d$ the above-defined transition kernel defines a unique probability measure $\mathbb{P}^x$ on the canonical (sample-path)  space such that the projection process, denoted by $\{X(t)\}_{t\ge0}$, is a strong Markov process (with respect to the completion of the corresponding natural filtration), it has continuous  sample paths, 
and  the same finite-dimensional distributions (with respect to $\mathbb{P}^x$) as $\{X(x,t)\}_{t\ge0}$ (with respect to $\mathbb{P}$). Since we are interested in distributional properties of the solution to   \eqref{eq1}, we focus on $\{X(t)\}_{t\ge0}$ rather than  $\{X(x,t)\}_{t\ge0}$ in the sequel. Furthermore, according to \cite[Lemma 2.5]{Majka-2020}, for any $t>0$ and continuous and bounded $f:\R^d\to\R$ the mapping $$x\mapsto\int_{\R^d}f(y)p(t,x,\D y)$$ is also continuous and bounded, i.e., $\{X(t)\}_{t\ge0}$ satisfies the so-called $\calC_b$-Feller property.   
In particular, this automatically implies that $\{X(t)\}_{t\ge0}$ is a strong Markov process with respect to the right-continuous and completed version of the  natural filtration.
Finally, in \cite[Theorem V.21.1]{Rogers-Williams-Book-II-2000} it is shown that
the class $\mathcal{C}^2(\R^d)$ is contained in the domain of the extended generator $\mathcal{G}$ of $\{X(t)\}_{t\ge0}$, and $\mathcal{G}$ restricted to this class has the following form:
\begin{equation}\label{gen}\mathcal{G}f(x)=\langle b(x),\nabla f(x)\rangle+\frac{1}{2}{\rm Tr}\,\sigma(x)\sigma(x)^T\nabla^2f(x).\end{equation}

\subsection{Main result} \label{ss2} Before stating the main result of this article, we first recall definitions of ergodicity. 

\medskip

\begin{enumerate}
\item [(i)]  A
(not necessarily finite) measure $\uppi(\D x)$ on $\mathcal{B}(\R^d)$ is called invariant for
$\{X(t)\}_{t\ge0}$ if
$$\int_{\R^d}p(t,x,\D y)\uppi(\D x)
= \uppi(\D y),\qquad t>0.$$
	\item [(ii)] The process $\{X(t)\}_{t\ge0}$  is said to be ergodic
	if it possesses an invariant probability 
	measure $\uppi(\D x)$  such that 
	\begin{equation*}
	\lim_{t\to\infty}\lVert p(t,x,\D {y})
	-\uppi(\D {y})\rVert_{{\rm TV}} =0,\qquad x \in \R^d.
	\end{equation*}
		\item [(iii)] The process $\{X(t)\}_{t\ge0}$  is said to be uniformly ergodic 
	if it possesses an invariant probability 
	measure $\uppi(\D x)$  such that 
	\begin{equation*}
	\lim_{t\to\infty}\sup_{x\in \R^d}\lVert p(t,x,\D {y})
	-\uppi(\D {y})\rVert_{{\rm TV}} =0.
	\end{equation*}
\end{enumerate}

\medskip

\noindent Here, $\lVert\upmu\rVert_{{\rm TV}}\df\sup_{B\in\mathcal{B}(\R^d)}|\upmu(B)|$ is the total variation norm  of a signed measure $\upmu(\D x)$ (on $\mathcal{B}(\R^d)$). Observe that: 

\medskip

\begin{enumerate}
	\item [(a)] (Uniform) ergodicity immediately implies the uniqueness of $\uppi(\D x)$.
	
	\item[(b)] The rate of convergence in (ii) depends on structural properties of the process (see, e.g., \cite{Douc-Fort-Guilin-2009} and \cite{Down-Meyn-Tweedie-1995}). 
	
	\item[(c)] The  rate of convergence  in (iii)  is necessarily exponential. Namely,  from (iii) we have that for any $t_0>0$,
	\begin{equation*}
	\lim_{n\to\infty}\sup_{x\in \R^d}\lVert p(nt_0,x,\D {y})
	-\uppi(\D {y})\rVert_{{\rm TV}} =0,
	\end{equation*} i.e., the Markov chain $\{X(nt_0)\}_{n\in\ZZ_+}$ is uniformly ergodic. Consequently, from \cite[Theorem 16.0.2]{Meyn-Tweedie-Book-2009} it follows that there exist $B,\beta>0$ such that 
	\begin{equation*}
	\lVert p(nt_0,x,\D {y})
	-\uppi(\D {y})\rVert_{{\rm TV}} \le B\E^{-\beta  n t_0},\qquad  x \in \R^d, \quad n\in\ZZ_+.\end{equation*} Since every $t\ge0$ can be clearly represented as $t=nt_0+s$ for some $n\in\ZZ_+$ and $s\in[0,t_0)$, we then conclude \begin{align*}\lVert p(t,x,\D {y})
	-\uppi(\D {y})\rVert_{{\rm TV}}&=\lVert p(nt_0+s,x,\D {y})
	-\uppi(\D {y})\rVert_{{\rm TV}}\\&\le\lVert p(nt_0,x,\D {y})
	-\uppi(\D {y})\rVert_{{\rm TV}}  \\&\le B\E^{-\beta  n t_0}
	\\&=B\E^{\beta s}\E^{-\beta  t}\\&\le B\E^{\beta t_0}\E^{-\beta  t},\qquad x \in \R^d, \quad t>0.\end{align*}
\end{enumerate}

\medskip

\noindent We are now in position to state the main result of this article.
Let $x_0\in\R^d$ and $r_0>0$ be as in (A4) and (A5). Set
\begin{align*}
A(x)&\df\frac{1}{2}{\rm Tr}\,\sigma(x)\sigma(x)^{T},\qquad x\in\R^{d},\\
B_{x_0}(x)&\df\langle x-x_0,b(x)\rangle,\qquad x\in\R^{d},\\
C_{x_0}(x)&\df\frac{\langle \sigma(x)^T(x-x_0),\sigma(x)^{T}(x-x_0)\rangle}{|x-x_0|^{2}},\qquad x\in\R^{d}\setminus\{x_0\},\\
\gamma_{x_0}(r)&\df\inf_{|x-x_0|=r}C_{x_0}(x),\qquad r\geq r_0,\\
\iota_{x_0}(r)&\df\sup_{|x-x_0|=r}\frac{2A(x)-C_{x_0}(x)+2B_{x_0}(x)}{C_{x_0}(x)},\qquad r\geq r_0,\\
I_{x_0}(r)&\df\int_{r_0}^{r}\frac{\iota_{x_0}(s)}{s}ds,\qquad r\geq r_0.
\end{align*}

\medskip

\begin{theorem}\label{tm:TV} Assume $\textbf{(A1)-(A5)}$, and  
	\begin{equation}\label{eq2}\Lambda\df\int_{r_0}^{\infty}\E^{-I_{x_0}(u)}\int_u^\infty  \frac{\E^{I_{x_0}(v)}}{\gamma_{x_0}(v)}\D v \,\D u<\infty\end{equation} (observe that (A5) ensures that $\Lambda$ is well defined).
	Then, $\{X(t)\}_{t\ge0}$ is uniformly ergodic.
\end{theorem}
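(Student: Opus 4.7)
I would combine a Foster--Lyapunov drift argument with a small-set argument in the classical Meyn--Tweedie framework. The strategy has three steps: (i) build a bounded $C^2$ Lyapunov function $V$ with $\mathcal{G}V\le -1/2$ outside $\bar B_{r_0}(x_0)$, so that $\sup_x\mathbb{E}^x[\tau]<\infty$ for the hitting time $\tau$ of $\bar B_{r_0}(x_0)$; (ii) verify that $\bar B_{r_0}(x_0)$ is a small set for the skeleton $\{X(nh)\}_{n\ge 0}$ at some $h>0$; (iii) invoke \cite[Theorem 16.0.2]{Meyn-Tweedie-Book-2009} on the skeleton and lift to continuous time via the argument recorded in Remark~(c) above.

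For step (i), set
\[
g(u)\df\E^{-I_{x_0}(u)}\int_u^\infty\frac{\E^{I_{x_0}(v)}}{\gamma_{x_0}(v)}\,\D v,\qquad f(r)\df\int_{r_0}^r g(u)\,\D u\quad(r\ge r_0),
\]
and $V(x)\df f(|x-x_0|)$ for $|x-x_0|\ge r_0$, with $V\equiv 0$ on $\bar B_{r_0}(x_0)$; by \eqref{eq2}, $V$ is bounded by $\Lambda$. A direct differentiation gives
\[
f''(r)+\frac{\iota_{x_0}(r)}{r}f'(r)=-\frac{1}{\gamma_{x_0}(r)},\qquad r>r_0.
\]
Plugging $V$ into \eqref{gen} and separating the radial and tangential contributions of $\nabla^2 V$ yields, for $|x-x_0|=r>r_0$,
\[
\mathcal{G}V(x)=\frac{C_{x_0}(x)}{2}\left[f''(r)+\frac{f'(r)}{r}\cdot\frac{2A(x)+2B_{x_0}(x)-C_{x_0}(x)}{C_{x_0}(x)}\right].
\]
Since $f'>0$ and the quotient in the bracket is pointwise $\le\iota_{x_0}(r)$, the bracket is at most $f''(r)+\iota_{x_0}(r) f'(r)/r=-1/\gamma_{x_0}(r)$, and combined with $C_{x_0}(x)\ge\gamma_{x_0}(r)$ this gives $\mathcal{G}V(x)\le -1/2$. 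Applying Dynkin's formula (after a harmless $C^2$-mollification of $V$ inside the ball, which is irrelevant before $\tau$) with $\tau\df\inf\{t\ge 0:X(t)\in \bar B_{r_0}(x_0)\}$ and letting $t\to\infty$ yields $\sup_{x\in\R^d}\mathbb{E}^x[\tau]\le 2\Lambda$.

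For step (ii), condition (A4) provides Hölder continuity of $b$ and $\sigma\sigma^T$ together with uniform positive definiteness of $\sigma\sigma^T$ on $B_{r_0}(x_0)$, so classical local parametrix/Aronson-type estimates produce a jointly continuous transition density on $B_{r_0}(x_0)$ for every $t>0$, bounded below by a positive constant on compact subsets. Compactness of $\bar B_{r_0}(x_0)$ then supplies a uniform minorization $p(h,x,\cdot)\ge\varepsilon\nu(\cdot)$ for all $x\in \bar B_{r_0}(x_0)$, making it a small set for $\{X(nh)\}_{n\ge 0}$; positivity of the density yields aperiodicity. Finiteness of $\mathbb{E}^x[\tau]$ from step (i), together with (A5) and the $\calC_b$-Feller property recalled in Section~\ref{ss1}, gives $\phi$-irreducibility of the skeleton (with $\phi$ Lebesgue on $B_{r_0}(x_0)$).

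Combining (i)--(iii), the skeleton is a $\phi$-irreducible aperiodic chain with a small set $C=\bar B_{r_0}(x_0)$ satisfying $\sup_x\mathbb{E}^x[\tau_C]<\infty$, and hence is uniformly ergodic by \cite[Theorem 16.0.2]{Meyn-Tweedie-Book-2009}; the computation in Remark~(c) then upgrades this to uniform ergodicity of $\{X(t)\}_{t\ge 0}$. I expect the main obstacle to be step (ii): the Lyapunov ODE and the drift bound in step (i) are essentially forced by the structure of \eqref{eq2}, but rigorously extracting a uniform minorization on $\bar B_{r_0}(x_0)$ from the purely local assumption (A4), and reconciling the merely topological non-degeneracy (A5) on $B^c_{r_0}(x_0)$ with $\phi$-irreducibility of the skeleton, is where the real technical work lies.
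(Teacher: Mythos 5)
Your Lyapunov function and the drift computation in step (i) are exactly those of the paper: the paper takes $L(x)=\bar L(|x-x_0|)+1$ with the same radial profile, obtains $\mathcal{G}L\le -1/2$ outside a ball $B_{r_1}(x_0)$, and---because $L\le\Lambda+1$---rewrites this as $\mathcal{G}L\le -L/(2(\Lambda+1))+c_2\mathbb{1}_C$. The difference lies in the route from the drift inequality to uniform ergodicity: the paper feeds this inequality directly into the continuous-time result \cite[Theorem 5.2]{Down-Meyn-Tweedie-1995}, after establishing open-set irreducibility and aperiodicity (Proposition \ref{P1}) and using the $\calC_b$-Feller property together with open-set irreducibility to conclude that compact sets are petite; you instead pass to a skeleton chain and invoke \cite[Theorem 16.0.2]{Meyn-Tweedie-Book-2009}.

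Two steps of your route have genuine gaps. First, the bound $\sup_x\mathbb{E}^x[\tau]\le 2\Lambda$ concerns the \emph{continuous-time} hitting time of $\bar B_{r_0}(x_0)$, whereas Theorem 16.0.2 applied to $\{X(nh)\}_{n\ge0}$ requires control of the skeleton's return time to a petite set (or an equivalent drift or minorization condition for the kernel $P_h$). The continuous process can enter and leave the ball strictly between skeleton epochs, so the two hitting times are not comparable without further argument; bridging this (for instance by integrating $\mathcal{G}V\le-1/2$ over $[0,h]$ to obtain a $P_h$-drift toward an enlarged compact set) is precisely the work that \cite[Theorem 5.2]{Down-Meyn-Tweedie-1995} packages, which is why the paper uses that theorem. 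Second, your uniform minorization $p(h,x,\cdot)\ge\varepsilon\nu(\cdot)$ for all $x\in\bar B_{r_0}(x_0)$ is not justified as stated: (A4) gives H\"older continuity and ellipticity only on the open ball, and the available lower bound on $p(h,x,\cdot)$ comes from the Dirichlet heat kernel of the process killed upon exiting $B_{r_0}(x_0)$ (this is what the paper uses, via \cite[Theorems 7.3.6 and 7.3.7]{Durrett-Book-1996}); that lower bound degenerates as $x$ approaches $\partial B_{r_0}(x_0)$, so compactness of the closed ball does not produce a uniform $\varepsilon>0$. The paper avoids this by extracting only open-set irreducibility and aperiodicity from the interior density, and then quoting the general fact that for an open-set irreducible $\calC_b$-Feller process every compact set is petite. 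Your step (i) is correct and, as you note, essentially forced by \eqref{eq2}; it is steps (ii)--(iii) that need to be reorganized along these lines.
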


\medskip

\noindent The   proof of Theorem \ref{tm:TV} is based on the Foster-Lyapunov method for uniform ergodicity of Markov processes, developed in \cite{Down-Meyn-Tweedie-1995}.
The method consists of identifying an appropriate recurrent (petite) set $C\in\mathcal{B}(\R^d)$ and constructing an appropriate  bounded function $L:\R^d\to[1,\infty)$ (the so-called Lyapunov  function) contained in the domain of  $\mathcal{G}$  (see \cite[Section 4]{Down-Meyn-Tweedie-1995} for details), such that the Lyapunov equation \begin{equation}
\label{eq:lyap}
\mathcal{G}L(x)\le-c_1L(x)+c_2\mathbb{1}_C(x),\qquad x\in\R^d,\end{equation} holds for some $c_1>0$ and $c_2\in\R$ (see \cite[Theorem 5.2]{Down-Meyn-Tweedie-1995}). 
Recall that    a set $C\in\mathcal{B}(\R^d)$ is said to be   petite if it satisfies the Harris-type minorization condition: there exist a probability   measure $\upeta_\mathcal{C}(\D t)$ on $\mathcal{B}((0,\infty))$ (the standard Borel $\sigma$-algebra on $(0,\infty)$) and a non-trivial measure $\upnu_C(\D x)$ on $\mathcal{B}(\R^d)$, such that $\int_0^\infty p(t,x,B)\upeta_C(\D t)\ge\upnu_C(B)$ for all $x\in C$ and $B\in\mathcal{B}(\R^d)$. Intuitively,  petite sets serve the role of singletons for Markov processes on non-discrete state spaces (see \cite[Chapter 5]{Meyn-Tweedie-Book-2009} and \cite[Section 4]{Meyn-Tweedie-AdvAP-II-1993}  for details).
However, the Lyapunov equation does not immediately imply ergodicity of $\{X(t)\}_{t\ge0}$. As in the discrete setting (see, e.g., \cite[Chapter 13]{Meyn-Tweedie-Book-2009}),  the process  has to satisfy additional structural properties (irreducibility and aperiodicity). Note that so far, we have not  used assumptions (A4) and (A5). In Proposition \ref{P1} we show that (A4) and (A5), together with (A1)-(A3), imply that  $\{X(t)\}_{t\ge0}$ is 

\medskip

\begin{enumerate}
	\item [(i)]	open-set irreducible, i.e., there exists a $\sigma$-finite measure $\uppsi(\D x)$ on
	$\mathcal{B}(\R^d)$ such that $${\rm supp}\,\uppsi\df\{x\in\R^d: \uppsi(O)>0\ \text{for every open neighborhood}\ O\ \text{of}\ x\}$$ has a non-empty interior, and whenever $\uppsi(B)>0$ we have
	$\int_0^{\infty}p(t,x,B)\D t>0$ for all $x\in\R^d$.
	
		\item [(ii)] aperiodic, i.e.,  
	there exist $t_0>0$ and a $\sigma$-finite measure $\upphi(\D x)$ on
	$\mathcal{B}(\R^d)$, such that $\upphi(B)>0$ implies
	$\sum_{n=0}^{\infty} p(nt_0,x,B) >0$ for all $x\in\R^d$.
\end{enumerate}

\medskip

\noindent Let us also remark that,  according to \cite[Propositio 4.1]{Meyn-Tweedie-AdvAP-II-1993} and  \cite[Theorems 5.1 and 7.1]{Tweedie-1994}, the $\mathcal{C}_b$-Feller property and open-set irreducibility of $\{X(t)\}_{t\ge0}$ ensure that the state space (in this case, $(\R^d,\mathcal{B}(\R^d)$) can be covered by a countable union of petite sets, and  that  every compact set is  petite.

To the best of our knowledge, Theorem \ref{tm:TV}  presents the first explicit conditions for the uniform ergodicity of diffusion processes based on the Lyapunov method. As previously noted, general conditions using the Lyapunov method for the uniform ergodicity of general Markov processes are derived in \cite{Down-Meyn-Tweedie-1995}. The only related results on this topic available in the literature (under the terminology of strong ergodicity) can be found in \cite[Section ]{Mao-2002}, \cite[Section 3]{Mao-2006} and \cite[Section 3]{Shao-Xi-2013}. The main difference is that the results in these works are based on the coupling method.   Therefore, the presented conditions are of the incremental type, which  in certain cases they can be  less practical, i.e. more difficult to verify (compare Examples \ref{ex1}, \ref{ex2} and \ref{ex3} with the examples in \cite{Mao-2002}, \cite{Mao-2006} and \cite{Shao-Xi-2013}).

\subsection{Literature review} \label{ss4}
Our work relates to active research on the ergodicity properties of Markov processes and the extensive literature on SDEs. 
In \cite{Arapostathis-Borkar-Ghosh_Book-2012}, \cite{Bhattacharya-1978}, \cite{Kulik-Book-2015}, \cite{Kulik-Book-2018}, \cite{Lazic-Sandric-2021}, \cite{Lazic-Sandric-2022}, \cite{Stramer-Tweedie-1997}, and \cite{Veretennikov-1997},   ergodicity properties of diffusion processes with respect to the total variation distance  are established using the Lyapunov(-type) method.
 In this article, building  on the ideas from \cite{Bhattacharya-1978} (see also \cite[Chapter 9]{Friedman-Book-1975} and \cite[Supplement]{Khasminskii-1960}), we complement these results by deriving  conditions 
for  the uniform ergodicity  of  these processes. Additionally, we apply these results to discuss the uniform ergodicity of a class of Markov jump processes obtained through the Bochner's subordination of diffusion processes. These results are related to 
\cite{Albeverio-Brzezniak-Wu-2010}, \cite{Arapostathis-Pang-Sandric-2019}, \cite{Deng-Schilling-Song-2017, Deng-Schilling-Song-Erratum-2018} \cite{Douc-Fort-Guilin-2009}, \cite{Down-Meyn-Tweedie-1995},
\cite{Fort-Roberts-2005}, \cite{Kevei-2018},  \cite{Kulik-2009},   \cite{Masuda-2007, Masuda-Erratum-2009}, \cite{Meyn-Tweedie-AdvAP-II-1993}, \cite{Meyn-Tweedie-AdvAP-III-1993}, \cite{Sandric-ESAIM-2016},  \cite{FYWang-2011}, \cite{Wang-2008}, \cite{Wang-2011}   and \cite{Wee-1999},  
where the ergodicity properties  of  general 
Markov processes are again established using the  Foster-Lyapunov method.  Analogous results have also been obtained in the discrete-time setting, see   \cite{Durmus-Fort-Moulines-Soulier-2004}, \cite{Douc-Moulines-Priouret-Soulier_Book-2018},   \cite{Fort-Moulines-2003}, \cite{Kulik-Book-2015}, \cite{Kulik-Book-2018}, \cite{Meyn-Tweedie-Book-2009}, \cite{Veretennikov-1997}, \cite{Veretennikov-2000}, \cite{Tuominen-Tweedie-1994} and the references therein.

The studies on ergodicity properties with respect to the total variation distance assume that 
the Markov processes are irreducible and aperiodic. This is satisfied if the process does not exhibit singular behavior in its motion, i.e., its diffusion part is non-singular and/or its jump part shows sufficient jump activity. Together with the Foster-Lyapunov condition, which ensures controllability of modulated moment of return times
to a petite set,  
these conditions lead to the ergodic properties stated.
In this article, we also discuss and provide conditions for the irreducibility and aperiodicity of diffusion processes. There is a vast literature on these, and related questions such as the strong
Feller property and heat kernel estimates of Markov processes. 
In particular, we refer the readers to 
\cite{Arisawa-2009}, \cite{Chen-Chen-Wang-2020}, \cite{Chen-Hu-Xie-Zhang-2017}, \cite{Chen-Zhang-2016},  \cite{Chen-Zhang-2018}, \cite{Grzywny-Szczypkowski-2019}, \cite{Kim-Lee-2019}, \cite{Kim-Song-Vondracek-2018}, \cite{Knopova-Schilling-2012}, \cite{Knopova-Schilling-2013},
	\cite{Kolokoltsov-2000}, \cite{Kolokoltsov-Book-2011}, \cite{Kwon-Lee-1999}, \cite{Pang-Sandric-2016},
	\cite{Sandric-TAMS-2016}, and \cite{Stroock-1975}
for the case of a class of Markov L\'evy-type processes with bounded coefficients,
and to
\cite{Arapostathis-Pang-Sandric-2019}, \cite{Bass-Cranston-1986}, \cite{Ishikawa-2001},
	\cite{Knopova-Kulik-2014}, \cite{Lazic-Sandric-2021}, \cite{Masuda-2007}, \cite{Masuda-Erratum-2009}, 
	\cite{Picard-1996, Picard-Erratum-2010}, \cite{Sandric-Valentic-Wang-2021}, \cite{Stramer-Tweedie-1997}, and \cite{Xi-Zhu-2019}
for the case of a class of It\^{o} processes.

At the end, we  remark that for Markov processes that do not converge in total variation (e.g., due to a lack of irreducibility and/or aperiodicity), 
ergodic properties under  Wasserstein distances are studied, as these processes may converge weakly under certain conditions. For instance, see \cite{Bolley-Gentil-Guillin-2012}, 
\cite{Butkovsky-2014}, \cite{Durmus-Fort-Moulines-2016}, \cite{Eberle-2011}, \cite{Eberle-2015}, \cite{Hairer-Mattingly-Scheutzow-2011}, \cite{Luo-Wang-2016}, \cite{Majka-2017}, \cite{Renesse-Sturm-2005} and \cite{Wang-2016}. 

\section{Proof of Theorem  \ref{tm:TV}}\label{s2}
In this section, we prove Theorem  \ref{tm:TV}. First, we prove that assumptions (A4) and (A5), together with (A1)-(A3), imply the open-set irreducibility and aperiodicity of  $\{X(t)\}_{t\ge0}$.

\begin{proposition}\label{P1} Under assumptions (A1)-(A5), the process 
	$\{X(t)\}_{t\ge0}$ is open-set irreducible and aperiodic.
	\end{proposition}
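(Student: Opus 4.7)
The strategy is to combine a local heat-kernel positivity result in $B_{r_0}(x_0)$, furnished by (A4), with a controllability argument showing every starting point reaches $B_{r_0}(x_0)$ with positive probability, furnished by (A5).

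For the first step, I would consider the process killed upon exit from $B_{r_0}(x_0)$. Inside this ball, (A4) provides $\alpha$-H\"older continuity of $b$ and $\sigma\sigma^T$ together with uniform non-degeneracy of $\sigma\sigma^T$. A classical parametrix construction (see \cite[Ch.~9]{Friedman-Book-1975}; cf.\ also \cite{Bhattacharya-1978}) then produces a jointly continuous, strictly positive fundamental solution $q(t,x,y)$ on $(0,\infty)\times B_{r_0}(x_0)\times B_{r_0}(x_0)$ of the parabolic problem associated with $\mathcal{G}$. Since the unkilled transition kernel dominates the killed density, this yields $p(t,x,A)\ge\int_A q(t,x,y)\,\mathrm{d}y>0$ for every $x\in B_{r_0}(x_0)$, every $t>0$, and every Borel $A\subset B_{r_0}(x_0)$ of positive Lebesgue measure.

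For the second step, fix an arbitrary $x\in\R^d$ and connect it to $x_0$ by a smooth curve, whose image is compact. On this image, (A1) gives local boundedness of the coefficients, (A2) forces continuity of $\sigma$ (fix $y$ and let $x\to y$ in (A2), bounding the $b$-term via (A1)), and (A4)-(A5) together with this continuity make $\sigma\sigma^T$ uniformly positive definite there. Covering the curve by a finite chain of small balls on each of which a local parametrix bound as in Step~1 applies, and iterating via the strong Markov property, one obtains $\mathbb{P}^x(X(s)\in B_{r_0/2}(x_0))>0$ for some $s=s(x)>0$; alternatively one invokes a Stroock-Varadhan-type support theorem.

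Taking $\psi$ and $\phi$ both equal to Lebesgue measure restricted to $B_{r_0/2}(x_0)$, so $\supp\psi=\overline{B_{r_0/2}(x_0)}$ has non-empty interior, both conclusions follow. For any Borel $A$ with $\psi(A)>0$, any $x\in\R^d$, and any fixed $t_0>0$, pick $s=s(x)$ from Step~2 and an integer $n$ with $nt_0>s$; Chapman-Kolmogorov gives
\[p(nt_0,x,A)\ge\int_{B_{r_0/2}(x_0)}p(nt_0-s,y,A)\,p(s,x,\mathrm{d}y)>0,\]
which delivers both $\sum_{n}p(nt_0,x,A)>0$ (aperiodicity) and $\int_0^\infty p(t,x,A)\,\mathrm{d}t>0$ (open-set irreducibility). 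The main technical obstacle lies in Step~1: rigorously verifying the parametrix construction of a strictly positive fundamental solution on the ball under only the localized regularity in (A4), and comparing the resulting killed density to the unkilled transition kernel.
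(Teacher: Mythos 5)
Your Step 1 and your final Chapman--Kolmogorov assembly match the paper: it likewise obtains a strictly positive, jointly continuous density $q(t,x,y)$ for the process killed on exiting $B_{r_0}(x_0)$ (by citing \cite[Theorems 7.3.6 and 7.3.7]{Durrett-Book-1996}, i.e.\ exactly the parametrix construction you describe), takes $\uppsi=\uplambda(\cdot\cap B_{r_0}(x_0))$, and concludes by splitting the trajectory at a visit to the ball. So the step you flag as ``the main technical obstacle'' is the one the paper disposes of by citation; the real content lies in your Step 2, and that is where the proof breaks.

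The chaining-of-parametrices argument (and equally a Stroock--Varadhan support theorem) needs regularity of the coefficients along the entire curve joining $x$ to $x_0$: a parametrix requires H\"older continuity of $b$ and $\sigma\sigma^T$ together with uniform ellipticity, and the support theorem requires at least continuity of $b$. Outside $B_{r_0}(x_0)$ the hypotheses provide neither: (A4) is purely local to $B_{r_0}(x_0)$, and (A2) is a one-sided, monotonicity-type condition which, while it does force continuity of $\sigma$ as you correctly observe, imposes no continuity on $b$ at all (e.g.\ a drift behaving like $-x/|x|$ is admissible). Hence ``a local parametrix bound as in Step 1'' is simply not available on the small balls covering your curve. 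The paper avoids this entirely: it proves $\mathbb{P}^x(\uptau_{B_{r_0}(x_0)}<\infty)>0$ for every $x$ by a Lyapunov/martingale argument, applying the generator to the radial function $L(x)=\bar L(|x-x_0|)$ with $\bar L(r)=\int_{r_0-\varepsilon}^{r}\E^{-I_{x_0}(u)}\,\D u$, checking $\mathcal{G}L\le 0$ on $B^c_{r_0}(x_0)$, and using optional stopping together with non-explosivity to get $\mathbb{P}^x(\uptau_{B_{r_0}(x_0)}=\infty)\le\bar L(|x-x_0|)/\bar L(\infty)<1$. This uses only (A1) and (A5) (the latter ensures $C_{x_0}>0$, so that $\iota_{x_0}$ and $I_{x_0}$ are well defined), with no smoothness of $b$ whatsoever --- which is precisely why those quantities appear in the setup. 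Unless you replace Step 2 by such a hitting-time argument (or by a Krylov--Safonov-type lower bound valid for merely measurable drift, which you would then have to develop), the proposal does not go through under (A1)--(A5).
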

\begin{proof}
	We first show that (A5), together with (A1)-(A3), implies that $$\mathbb{P}^x(\uptau_{B_{r_0}(x_0)}<\infty)>0, \qquad  x\in\R^d,$$ where $x_0\in\R^d$ and $r_0>0$ are given in (A5), and $ \uptau_{B_{r_0}(x_0)}$ is defied as $\uptau_{B_{r_0}(x_0)}\df\inf\{t\ge0:X(t)\in B_{r_0}(x_0)\}$.
		Let $0<\varepsilon<r_0$, and define  $$\bar{L}(r)\df\int_{r_0-\varepsilon}^{r}\E^{-I_{x_0}(u)}\D u,\qquad r\geq r_0-\varepsilon.$$ 
	For $r> r_0-\varepsilon$ we then have
	$$
	\bar{L}'(r)=\E^{-I_{x_0}(r)}>0\qquad \text{and}\qquad 
	\bar{L}''(r)=-\frac{\bar{L}'(r)}{r}\iota_{x_0}(r).
	$$		
	Further, let $L:\R^{d}\to[0,\infty)$, $L\in \mathcal{C}^2(\R^d)$, be such that $L(x)=\bar{L}(|x-x_0|)$ for $x\in B^c_{r_0}(x_0)$. For $x\in B^c_{r_0}(x_0)$, we then have
	\begin{align*}
	\mathcal{G}L(x)&=\frac{1}{2}C_{x_0}(x)\bar{L}''(|x-x_0|)+\frac{\bar{L}'(|x-x_0|)}{2|x-x_0|}(2A(x)-C_{x_0}(x)+2B_{x_0}(x))\\
	&= \frac{\bar{L}'(|x-x_0|)}{2|x-x_0|}\left(2A(x)-C_{x_0}(x)+2B_{x_0}(x)-C_{x_0}(x)\iota_{x_0}(|x-x_0|)\right),
	\end{align*}
	and thus $$	\mathcal{G}L(x)\le0.$$
	Further, for $n\in\N$,  define 
	$\tau_n\df\uptau_{B^c_n(x_0)}.$ Clearly, $\uptau_n$, $n\in\N$, are stopping times such that (due to non-explosivity of $\{X(t)\}_{t\ge0}$) we have $\uptau_n\to\infty$, $\Prob^x$-a.s.,  as $n\to\infty$ for all $x\in\R^d$.
	Hence, the processes (recall that $\mathcal{G}$ is the extended generator of $\{X(t)\}_{t\ge0}$) $$L(X(t\wedge\tau_n))-L(X(0))-\int_0^{t\wedge\uptau_n}\mathcal{G}L(X(s))\D s,\qquad t\geq0\quad n\in\N,$$ are $\Prob^x$-martingales. For $x\in B^c_{r_0}(x_0)$, it then holds that 
	\begin{align*}
	\mathbb{E}^x[L(X(t\wedge\tau_n\wedge\tau_{B_{r_0}(x_0)}))]-\mathbb{E}^x[L(X(0))]&=\mathbb{E}^x\int_0^{t\wedge\tau_n\wedge\tau_{B_{r_0}(x_0)}}\mathcal{G}L(X(s))\D s\leq 0,
	\end{align*}
	i.e.,
	$$\mathbb{E}^x[\bar{L}(|X(t\wedge\tau_n\wedge\tau_{B_{r_0}(x_0)})-x_0|)]\le \bar{L}(|x-x_0|).$$
	In particular,
	$$\mathbb{E}^x[\bar{L}(|X(t\wedge\tau_n)-x_0|)\mathbb{1}_{\{\tau_{B_{r_0}(x_0)}>\tau_n\}}]\le \bar{L}(|x-x_0|),\qquad x\in B^c_{r_0}(x_0).$$ By letting $t\to\infty$, Fatou's lemma implies 
	$$\bar{L}(n)\mathbb{P}^x(\tau_{B_{r_0}(x_0)}>\tau_n)\le \bar{L}(|x-x_0|),\qquad x\in B^c_{r_0}(x_0).$$ Hence, by letting $n\to\infty$, we conclude that
	$$ \mathbb{P}^x(\tau_{B_{r_0}(x_0)}=\infty)\le \frac{\bar{L}(|x-x_0|)}{\bar L(\infty)}<1,\qquad x\in B^c_{r_0}(x_0),$$ i.e., $\mathbb{P}^x(\tau_{B_{r_0}(x_0)}<\infty)>0$ for all $x\in\R^d.$

	We next show that $\{X(t)\}_{t\ge0}$ is open-set irreducible. According  to \cite[Theorems 7.3.6 and 7.3.7]{Durrett-Book-1996}, there exists a strictly positive function  $q(t,x,y)$ on $(0,\infty)\times \bar{B}_{r_0}(x_0)\times\bar{B}_{r_0}(x_0)$, jointly continuous in $t$, $x$ and $y$, and twice continuously differentiable in $x\in B_{r_0}(x_0)$, satisfying  \begin{equation*} \mathbb{E}^x[f(X(t)) \mathbb{1}_{\{\uptau_{\bar{B}^c_{r_0}(x_0)}>t\}}]=\int_{B_{r_0}(x_0)}q(t,x,y)f(y)\D y,\end{equation*}   for any $t>0$, $x\in B_{r_0}(x_0)$ and any continuous, bounded  function $f:\R^d\to\R.$ By employing the dominated convergence theorem, the above relation holds also for $\mathbb{1}_O$, where $O\subseteq B_{r_0}(x_0)$ is an open set. Denote by $\mathcal{D}$ the class of all  $B\in\mathcal{B}(B_{r_0}(x_0))$ (the Borel $\sigma$-algebra on $B_{r_0}(x_0)$) such that \begin{equation*} \mathbb{P}^x(X_t\in B,\ \uptau_{\bar{B}^c_{r_0}(x_0)}>t)=\int_{B}q(t,x,y) \D y,\qquad t>0,\quad x\in B_{r_0}(x_0).\end{equation*} Clearly, $\mathcal{D}$ contains the $\pi$-system of open rectangles in $B_{r_0}(x_0)$, and forms a $\lambda$-system. By  Dynkin's $\pi$-$\lambda$ theorem, we conclude that $\mathcal{D}=\mathcal{B}(B_{r_0}(x_0)).$ Consequently, for any $t>0$, $x\in B_{r_0}(x_0)$ and $B\in\mathcal{B}(\R^d)$, we have  \begin{equation*} p(t,x,B)\ge \int_{B\cap B_{r_0}(x_0)}q(t,x,y)\D y.\end{equation*} Define  $\uppsi(\cdot)\df\uplambda(\cdot\cap B_{r_0}(x_0))$, where $\uplambda(\D x)$ denotes the Lebesgue measure on $\R^d$. Clearly, $\uppsi(\D x)$ is a $\sigma$-finite measure whose support has a non-empty interior. It remains to prove that  $$\int_0^{\infty}p(t,x,B)\D t>0,\qquad x\in\R^d,$$ whenever $\uppsi(B)>0$, $B\in \mathcal{B}(\R^d)$.
	For $x\in B_{r_0}(x_0)$ the implication holds trivially.
	Let  $x\in B^c_{r_0}(x_0)$  and $B\in\mathcal{B}(\R^d)$, $\uppsi(B)>0$, be arbitrary. For all $s>0$ we have
	\begin{align*}
	\int_0^\infty p(t,x,B) \D t & \ge \int_s^\infty  p(t,x,B)\D t \\&= \int_s^\infty \int_{\R^d} p(t-s,x,\D y)p(s,y,B) \D t \\
	&\geq \int_s^\infty \int_{B_{r_0}(x_0)} p(t-s,x,\D y)p(s,y,B) \D t \\
	&= \int_{B_{r_0}(x_0)}p(s,y,B) \int_s^\infty p(t-s,x,\D y) \D t .
	\end{align*}
	The assertion follows from the fact that $p(s,y,B)>0$ for $y\in B_{r_0}(x_0)$, and 
	\begin{equation*} \int_s^\infty p(t-s,x,B_{r_0}(x_0))\D t
	=\int_0^\infty p(t,x,B_{r_0}(x_0)) \D t
	=\mathbb{E}^x\left[\int_0^\infty \mathbb{1}_{\{X(t)\in B_{r_0}(x_0)\}}\D t\right]>0,
	\end{equation*}
	since $\{X(t)\}_{t\ge0}$ has continuous sample paths, $B_{r_0}(x_0)$ is an open set, and $\mathbb{P}^x(\uptau_{B_{r_0}(x_0)}<\infty)>0$ for every $x\in\R^d$.
	
	Finally, we show that $\{X(t)\}_{t\ge0}$ is aperiodic.  Specifically, we show that $$ \sum_{n=1}^\infty p(n,x,B)>0,\qquad x\in\R^d,$$ whenever $\uppsi(B)>0,$  where $B\in \mathcal{B}(\R^d)$.	For $x\in B_{r_0}(x_0)$, this relation clearly holds. For $x\in B^c_{r_0}(x_0)$ and $B\in\mathcal{B}(\R^d)$, with $\uppsi(B)>0$, we have \begin{equation*}
	\sum_{n=1}^\infty p(n,x,B)\ge\int_{B_{r_0}(x_0)}\sum_{n=1}^\infty p(n-t,x,\D y)p(t,y,B),\qquad t\in(0,1).
	\end{equation*} Since $p(t,y,B)>0$ for $y\in B_{r_0}(x_0)$, it suffices to show that \begin{equation*}\sum_{n=1}^\infty p(n-t,x,B_{r_0}(x_0))\ge\mathbb{P}^x\left(\bigcup_{n=1}^\infty\{X(n-t)\in B_{r_0}(x_0)\}\right)>0\end{equation*} for some $t\in(0,1)$. Now, assume by contradiction that this is not the case, i.e., \begin{equation*} \mathbb{P}^x\left(\bigcup_{n=1}^\infty\{X(n-t)\in B_{r_0}(x_0)\}\right)=0,\qquad t\in(0,1)\,.\end{equation*} This implies that \begin{equation*}\mathbb{P}^x\left(\bigcup_{q\in\mathbb{Q}_+\setminus\ZZ_+}\{X(q)\in B_{r_0}(x_0)\}\right)=0,\end{equation*} which is impossible, since $\{X(t)\}_{t\ge0}$ has continuous sample paths, $B_{r_0}(x_0)$ is an open set and $\mathbb{P}^x(\uptau_{B_{r_0}(x_0)}<\infty)>0$ for every $x\in\R^d$. Therefore, the original assumption must be false, and we conclude that $$ \sum_{n=1}^\infty p(n,x,B)>0$$ for all $x\in\R^d$ and $B\in \mathcal{B}(\R^d)$ with $\uppsi(B)>0,$  proving aperiodicity.
	\end{proof}

We are now in position to prove Theorem \ref{tm:TV}. To do so, we  apply \cite[Theorem 5.2]{Down-Meyn-Tweedie-1995}. 
However, note that the notion of aperiodicity used in \cite[Theorem 5.2]{Down-Meyn-Tweedie-1995} differs slightly from the one used in this article. Therefore, we need to show that the  aperiodicity defined in this article implies the notion used in \cite{Down-Meyn-Tweedie-1995}. 
Indeed, by
\cite[Proposition 6.1]{Meyn-Tweedie-AdvAP-II-1993}, \cite[Theorem 4.2]{Meyn-Tweedie-AdvAP-III-1993}, along with the 
aperiodicity of $\{X(t)\}_{t\ge0}$, there exists  a petite set $C\in\mathcal{B}(\R^d)$, a time 	 $T>0$, and a non-trivial measure $\upnu_C(\D x)$ on $\mathcal{B}(\R^d)$, such that $\upnu_C(C)>0$ and $$p(t,x,B)\ge\upnu_C(B),\qquad x\in C,\quad t\ge T,\quad B\in\mathcal{B}(\R^d).$$ In particular, we have
$$p(t,x,C)>0,\qquad x\in C,\quad t\ge T,$$	
which aligns exactly with the definition of aperiodicity used in \cite{Down-Meyn-Tweedie-1995}.

\begin{proof}[Proof of Theorem \ref{tm:TV}]
Define
$$\bar{L}(r)\df\int_{r_0}^{r}\E^{-I_{x_0}(u)}\int_u^{\infty}\frac{\E^{I_{x_0}(v)}}{\gamma_{x_0}(v)}\D v\,\D u,\qquad r\geq r_0.$$  Clearly, for  $r\ge r_0$ it holds that $\bar{L}(r)\le\Lambda$,
 \begin{align*}\bar{L}'(r)=\E^{-I_{x_0}(r)}\int_r^{\infty}\frac{\E^{I_{x_0}(u)}}{\gamma_{x_0}(u)}\D u\qquad\text{and}\qquad
\bar{L}''(r)=-\iota_{x_0}(r)\frac{\bar{L}'(r)}{r}-\frac{1}{\gamma_{x_0}(r)}.
\end{align*}
Further, fix $r_1>r_0$  and let $L:\R^{d}\to[0,\infty)$, $L\in \mathcal{C}^2(\R^d)$, be such that $L(x)=\bar{L}(|x-x_0|)+1$ for $x\in B^c_{r_1}(x_0).$ Now, for $x\in B^c_{r_1}(x_0)$, we have (recall the representation of $\mathcal{G}$ given in \eqref{gen})
\begin{align*}
\mathcal{G}L(x)&=\frac{1}{2}C_{x_0}(x)\bar{L}''(|x-x_0|)+\frac{\bar{L}'(|x-x_0|)}{2|x-x_0|}(2A(x)-C_{x_0}(x)+2B_{x_0}(x))\\
&\le \frac{\bar{L}'(|x-x_0|)}{2|x-x_0|}(2A(x)-C_{x_0}(x)+2B_{x_0}(x)-C_{x_0}(x)\iota_{x_0}(x))-\frac{C_{x_0}(x)}{2\gamma_{x_0}(|x-x_0|)}
\\
&\le-\frac{C_{x_0}(x)}{2\gamma_{x_0}(|x-x_0|)}\\
&\le -\frac{1}{2}\\
&\le-\frac{1}{2(\Lambda +1)}L(x).
\end{align*}
 Thus, we have obtained the relation in \eqref{eq:lyap} with $c_1=1/(2(\Lambda +1))$, $C=\bar B_{r_1}(x_0)$ and $c_2=\sup_{x\in C}|\mathcal{G}L(x)|$, which proves the assertion of the theorem.
 \end{proof}

 \section{Examples and Applications of  Theorem  \ref{tm:TV}}\label{s3}
 In this section, we provide several examples of diffusion processes that satisfy the assumptions of  Theorem  \ref{tm:TV}, along with an application of Theorem  \ref{tm:TV} to a class of Markov processes with jumps.
We begin with a simple example of a diffusion process with polynomial drift.
 
 \begin{example}\label{ex1} Let $\sigma(x)=I_d$ (the $d\times d$ identity matrix) and let $b(x)=-Kx|x|^{\kappa-1}$ with $K>0$ and $\kappa>1$. It is straightforward to verify that $b(x)$ and $\sigma(x)$ satisfy assumptions (A1)-(A5) with $x_0=0$ and arbitrary $r_0>0$, and  that $\gamma_{x_0}(r)=1$ and $$I_{x_0}(r)=\frac{2K}{\kappa+1}r_0^{\kappa+1}-\frac{2K}{\kappa+1}r^{\kappa+1}.$$
 	We then have $$\Lambda=\int_{r_0}^\infty\E^{\frac{2K}{\kappa+1}u^{\kappa+1}}\int_u^\infty\E^{-\frac{2K}{\kappa+1}v^{\kappa+1}}\D v\,\D u.$$
 	Observe that \begin{align*}\int_u^\infty\E^{-\frac{2K}{\kappa+1}v^{\kappa+1}}\D v&=(2K)^{\frac{-2\kappa-1}{\kappa+1}}(\kappa+1)^{\frac{\kappa}{\kappa+1}}\int_{\frac{2K}{\kappa+1}u^{\kappa+1}}v^{-\frac{\kappa}{\kappa+1}}\E^{-v}\D v\\
 	&=(2K)^{\frac{1}{\kappa}}(\kappa+1)^{-\frac{\kappa+1}{\kappa}}\Gamma\left(\frac{1}{\kappa+1},\frac{2K}{\kappa+1}u^{\kappa+1}\right),\end{align*}
where $\Gamma(p,q)$, $p,q\in\R$, is the incomplete gamma function. According to \cite[display 6.5.32]{Abramowitz-Stegun-1964}, $$\lim_{u\to\infty}\frac{\Gamma\left(\frac{1}{\kappa+1},\frac{2K}{\kappa+1}u^{\kappa+1}\right)}{u^{-\kappa}\E^{-\frac{2K}{\kappa+1}u^{\kappa+1}}}=2^{-\frac{\kappa}{\kappa+1}}(\kappa+1)^{\frac{\kappa}{\kappa+1}},$$
 which proves that $\Lambda<\infty.$ \qed
 	\end{example}
 
In the next example, we consider a one-dimensional diffusion process with a non-incremental drift coefficient.
 
 \begin{example}\label{ex2}
	Let $\sigma(x)=1$,  and let $b(x)$ be locally Lipschitz continuous, such that $b(x)=-Kx|x|^{\kappa-1}(\cos x+\rho)$ for $x\in B^c_1(0)$, where $K>0$, $\kappa>1$, and $\rho>0$. It is straightforward to verify that $b(x)$ and $\sigma(x)$ satisfy assumptions (A1)-(A5) with $x_0=0$ and arbitrary $r_0>0$. Furthermore,  we have   $\gamma_{x_0}(r)=1$ and $$I_{x_0}(r)=\frac{2K\rho}{\kappa+1}r_0^{\kappa+1}-\frac{2K\rho}{\kappa+1}r^{\kappa+1}-2K\int_{r_0}^ru^{\kappa}\cos u\,\D u.$$
	Note that $$\lim_{r\to\infty}\frac{\int_{r_0}^ru^{\kappa}\cos u\,\D u}{r^{k+\varepsilon}}=\lim_{r\to\infty}\frac{r^k\sin r-r_0^k\sin r_0-\kappa\int_{r_0}^ru^{\kappa-1}\sin u\,\D u}{r^{k+\varepsilon}}=0$$ for any $\varepsilon>0$. Thus, the finiteness of $\Lambda$ reduces on showing the finiteness of $$\int_{r_0}^\infty\E^{\frac{2K\rho}{\kappa+1}u^{\kappa+1}}\int_u^\infty\E^{-\frac{2K\rho}{\kappa+1}v^{\kappa+1}}\D v\,\D u,$$ which, as  shown in Example \ref{eq1}, is finite if, and only if, $\kappa>1$.
	\qed
\end{example}

 In the following example, we discuss uniform ergodicity of a class of Langevin tempered diffusion processes.

  \begin{example}\label{ex3}
 	Let $\alpha\in(0,1/d)$, and let $\pi\in \mathcal{C}^2(\R^d)$ be strictly positive, defined as 
 	$\pi(x)\df c|x|^{-\nicefrac{1}{\alpha}}$ for some $c>0$ and for all $x\in B_1^c(0)$. Additionally, assume that
 	 $\int_{\R^d}\pi(x)\D x=1$.
 	For $\beta\in(0,(1+\alpha(2-d))/2)$ and $x\in\R^d$,  define
 	\begin{equation*}
 	\sigma(x)\df(\pi(x))^{-\beta}I_d,
 	\end{equation*}
 	and
 	\begin{equation*}
 	b(x)\df\frac{1}{2}(\sigma(x)\sigma(x)^T\nabla\log(\pi(x))+\nabla\cdot \sigma(x)^T\sigma(x))
 	=\frac{1-2\beta}{2}\pi(x)^{-2\beta}\nabla\log(\pi(x)).
 	\end{equation*}
 	Observe that $$\sigma(x)=c^{-\beta}|x|^{\frac{\beta}{\alpha}}\quad\text{and}\quad b(x)= -\frac{1-2\beta}{2\alpha}c^{-2\beta}x|x|^{\frac{2\beta}{\alpha}-2},\qquad x\in B_1^c(0).$$
 Note  that $b(x)$ and $\sigma(x)$ do not in general satisfy assumptions (A2) and A(3). However, observe that in the proof of Theorem \ref{tm:TV} we  did not actually use the fact that $\{X(t)\}_{t\ge0}$ is a unique strong solution to \eqref{eq1}. All that we required is that the martingale problem for $(b,\sigma\sigma^T)$ is well posed, which is equivalent to  \eqref{eq1} admitting a unique (in distribution) weak solution (see \cite[Theorem V.20.1]{Rogers-Williams-Book-II-2000}). Furthermore,  in \cite[Proposition 15]{Fort-Roberts-2005} it has been shown that the SDE in \eqref{eq1}
 admits a weak solution
 $(\Omega,\mathcal{F},\{\mathcal{F}(t)\}_{t\ge0},\{B(t)\}_{t\ge0},\{X(t)\}_{t\ge0},\Prob)$,
which is a conservative strong Markov process with continuous sample paths.
Moreover, it is open-set irreducible, aperiodic, every compact set is petite, and
$\uppi(\D x)\df\pi(x)\D x$ is its unique invariant probability measure.
Here, $(\Omega,\mathcal{F}, \{\mathcal{F}(t)\}_{t\ge0},\{B(t)\}_{t\ge0},\Prob)$ is a
standard $n$-dimensional Brownian motion.
 Now, take $x_0=0$ and $r_0>1$.	It is straightforward to verify   that $\gamma_{x_0}(r)=c^{-2\beta}r^{\frac{2\beta}{\alpha}}$ and $$I_{x_0}(r)=\frac{1-2\beta}{\alpha}\ln r_0-\frac{1-2\beta}{\alpha}\ln r.$$
 		We then have $$\Lambda=c^{2\beta}\int_{r_0}^\infty\E^{\frac{1-2\beta}{\alpha}\ln u}\int_u^\infty\frac{\E^{-\frac{1-2\beta}{\alpha}\ln v}}{v^{\frac{2\beta}{\alpha}}}\D v\,\D u=c^{2\beta}\int_{r_0}^\infty u^{\frac{1-2\beta}{\alpha}}\int_u^\infty v^{-\frac{1}{\alpha}}\D v\,\D u.$$ This simplifies to $$\Lambda=\frac{\alpha c^{2\beta}}{1-\alpha}\int_{r_0}^\infty u^{1-\frac{2\beta}{\alpha}}\D u,$$ which is finite if, and only if,  $\alpha<\beta.$\qed
 	 \end{example}

The results discussed in this article can be  used to establish uniform ergodicity of a class of Markov processes with jumps. Recall that a subordinator  $\{S(t)\}_{t\ge0}$ is a non-decreasing L\'{e}vy process on $\left[0,\infty\right)$. If $\{X(t)\}_{t\ge0}$ and $\{
 S(t)\}_{t\ge0}$ are independent, 
 then the process $X^{S}(t)\df X(S(t))$, $t\ge0$, obtained by applying random time change to $\{X(t)\}_{t\ge0}$   through $\{S(t)\}_{t\ge0}$, is referred to as the subordinate process $\{X(t)\}_{t\ge0}$ with subordinator $\{S(t)\}_{t\ge0}$ in the sense of Bochner. It is easy to see that $\{X^S(t)\}_{t\ge0}$ is  a Markov process with  the transition kernel
 $$p^S(t,x,\D y)=\int_{\left[0,\infty\right)} p(s,x,\D y)\mu_t(\D s),$$
 where $\upmu_t(\D s)=\mathbb{P}(S(t)\in\D s)$ is the transition probability of $S(t)$, $t\ge0$. Additionally, it is straightforward to check that if $\uppi(\D x)$ is an invariant probability measure for $\{X(t)\}_{t\ge0}$, then it is also invariant for the subordinate process $\{X^S(t)\}_{t\ge0}$.
 Moreover, if $$\lVert p(t,x,\D {y})
 -\uppi(\D {y})\rVert_{{\rm TV}}\le B(x)r(t),\qquad x\in\R^d,\quad t\ge0,$$ for some $B:\R^d\to[0,\infty)$ and measurable $r:[0,\infty)\to[0,\infty)$,
 then
 \begin{align*}
 \lVert p^S(t,x,\D {y})
 -\uppi(\D {y})\rVert_{{\rm TV}}&=\left\lVert \int_{\left[0,\infty\right)} (p(s,x,\D y)-\uppi(\D {y}))\upmu_t(\D s)
 \right\rVert_{{\rm TV}}\\&\le c(x)\int_{\left[0,\infty\right)}r(s)\upmu_t(\D s)\\
 &=B(x)\mathbb{E}[r(S(t))].
 \end{align*}
 Furthermore, since $\mathbb{E}[S(t)]>0$ for $t>0$, the strong law of large numbers (see, e.g., \cite[Theorems 36.5 and 36.6]{Sato-Book-1999}) implies that $\lim_{t\to\infty} S(t)=\infty$, almost surely. Hence, if $r(t)=\E^{-\beta t}$ for some $\beta>0$, the dominated convergence theorem gives that $$\lim_{t\to\infty}\mathbb{E}[r(S(t))]=0.$$ Therefore, if  $\{X(t)\}_{t\ge0}$ is uniformly ergodic, then $\{X^S(t)\}_{t\ge0}$ is also uniformly ergodic for any subordinator $\{S(t)\}_{t\ge0}$.

\section*{Acknowledgement} 
\noindent The author thanks the anonymous referee for the helpful comments that have led to significant
improvements of the results in the article.
This work has been supported by \textit{Croatian Science Foundation} under project 2277.

\section*{Conflict of interest}
 \noindent The author declares that he has no conflict of interest.

\bibliographystyle{abbrv}
\bibliography{References}

\end{document}